\theoremstyle{definition}
\newtheorem{Theorem}{Theorem}[section]
\newtheorem{Lemma}{Lemma}[section]
\newtheorem{Proposition}{Proposition}[section]
\newtheorem{Example}{Example}[section]
\newtheorem{Remark}{Remark}[section]
\newtheorem{Definition}{Definition}[section]
\newcommand{\C}{\mathbb{C}}
\newcommand{\R}{\mathbb{R}}
\newcommand{\Z}{\mathbb{Z}}
\newcommand{\mc}[1]{\mathcal{#1}} 
\newcommand{\ms}[1]{\mathscr{#1}} 
\newcommand{\mbf}[1]{\mathbf{#1}} 
\newcommand{\mt}[1]{\text{#1}}
\def\oa{\overline{\alpha}}
\begin{document}

\title{Monoid Embeddings of Symmetric Varieties}

\author[1]{Mahir Bilen Can}
\author[2]{Roger Howe}
\author[3]{Lex Renner}
\affil[1]{{\small Tulane University, New Orleans; mahirbilencan@gmail.com}}
\affil[2]{{\small Yale University, New Haven; roger.howe@yale.edu}}
\affil[3]{{\small Western University; lex@uwo.edu}}
\normalsize

\date{}
\maketitle

\begin{abstract}
We determine when an antiinvolution on an 
adjoint semisimple linear algebraic group extends 
to an antiinvolution on a $J$-irreducible monoid.
Using this information, we study a special class 
of compactifications of symmetric varieties. 
Extending the work of Springer on involutions, we describe the 
parametrizing sets of Borel orbits in these special 
embeddings. 
\\ \\
{\em Key words: Borel orbits, reductive monoids, symmetric varieties}
\\ \\
{\em AMS-MSC:} 14M17, 20G05, 20M32
\end{abstract}

\section{Introduction}

Let $G$ be a complex reductive  
algebraic group and let $\theta : G\rightarrow G$ be 
an algebraic group automorphism such that $\theta^2 = id$. 
The fixed subgroup $H:=\{ g\in G:\ \theta(g) = g\}$ 
is called the symmetric subgroup associated with $\theta$
and the corresponding quotient $G/H$ is called 
a symmetric variety. Let $B$ be a Borel subgroup of $G$. 
From the works of Richardson and Springer in~\cite{RS90} 
and Helminck in~\cite{Helminck,Helminck96}, we know that 
there is a close relationship between the set of 
$B$-orbits in $G/H$ and the set of 
involutions in the Weyl group of $G$. In particular,
we know that the number of $B$-orbits in $G/H$ is finite,~\cite{Matsuki79,Springer83}.
A purpose of our paper is to show that 
the sets of $B$-orbits in certain ``monoid embeddings'' of 
the symmetric varieties are closely related to the
sets of involutions in certain finite inverse semigroups. 
We proceed to explain what we mean by a monoid embedding.

A {\em reductive monoid} is a linear algebraic monoid 
whose group of units is a reductive algebraic group. 
Let $M$ be a reductive monoid and let $\theta_{an}$ 
be an antiinvolution, that is to say, 
$\theta_{an}: M\rightarrow M$
is an automorphism of $M$ such that $\theta_{an}^2 = id$ 
and for every $m_1,m_2\in M$ we have 
$\theta_{an}(m_1 m_2) = \theta_{an}(m_2)\theta_{an}(m_1)$.
If $G$ denotes the group of units of $M$, then 
we denote the restriction of $\theta_{an}$ to $G$ by the same notation. 
Then the morphism $\theta: G\rightarrow G$ defined 
by $\theta(g):= \theta_{an}(g)^{-1}$ ($g\in G$)
is an involutory algebraic group automorphism. As before, 
let us denote by $H$ the fixed subgroup of $\theta$.
The morphic map 
\begin{align}
\tau : & M\longrightarrow M \notag \\
& m \longmapsto m \theta_{an}(m) \label{A:tau on M}
\end{align}
restricts to give a morphism on $G$ and we denote 
this restriction by $\tau$ as well. The image of $\tau$
on $G$ is denoted by $P$. Note that $P$ is a closed subvariety of $G$, 
and furthermore, it is isomorphic to $G/H$ as a 
variety (see~\cite[Lemma 2.4]{Richardson82}).

The {\em twisted (conjugation) action} of $G$ on $M$, denoted by $*$, is 
defined as follows:
\begin{align*}
g * m = g m \theta_{an} (g) = g m \theta(g)^{-1}\qquad (g\in G, m\in M).
\end{align*} 
It is easy to check that 
$P$ is stable under the twisted action. 
In fact, $P=G * 1_{G}$. 
It is not hard to see also that the 
set of $B*$-orbits in $P$ is in bijection with the 
set of $B$-orbits in $G/H$. 
We call the Zariski closure of $P$ in $M$
the monoid embedding of $P$. 
Since $P$ is isomorphic to $G/H$ and since 
$\overline{P}$ is $G*$-stable, we view 
the monoid embedding of $P$ as an equivariant
embedding of $G/H$ in the reductive monoid $M$.

An important auxiliary variety for our purposes is the fixed subvariety 
$Q$ defined by 
\begin{align*}
Q:=  \{ g\in G:\ \theta_{an}(g) = g \}. 
\end{align*}
It is easy to check that $Q$ is closed in $G$, $P\subset Q$, and that $Q$ is $G*$-stable. 
We know from Springer's work~\cite{Springer83} that 
if $B$ is a $\theta$-stable Borel subgroup of $G$,
then $Q$ has only finitely many $B*$-orbits. 
As in the case of $P$, the parametrizing set of 
$B*$-orbits in $Q$ is closely related to the set of 
involutions in the Weyl group 
$W:=N_G(T)/T$, where $T$ is a maximal torus contained in $B$
and $N_G(T)$ is the normalizer of $T$ in $G$.
(Here, by an involution in $W$ we mean an element $\sigma\in W$ 
such that $\sigma^2 =id$.)

Let $M_Q$ denote the following (closed) subvariety of $M$:
\begin{align}\label{A: definition of M_P}
M_Q:=\{ m \in M :\ \theta_{an} (m) = m \}.
\end{align}
Clearly, $\overline{P}\subseteq \overline{Q} \subseteq M_Q$
and $G$ acts on the sets $P,Q,\overline{P},\overline{Q}$, 
and on $M_Q$ by the same formula 
$g *m = g m \theta_{an}(g)$. 
Our first main result is about the parametrizing sets of 
$B*$-orbits in the embeddings of $P$ and $Q$ in $M$.

\begin{Theorem}\label{T:Corollary}
Let $M$ be a normal reductive monoid with unit group $G$, $\theta_{an}$ be an antiinvolution on $M$,
and let $\theta$ denote the involutive automorphism on $G$ that is defined 
by $\theta(g) = \theta_{an}(g)^{-1}$ for $g\in G$. We fix a pair $(T,B)$ of $\theta$-stable 
maximal torus and a Borel subgroup in $G$,
and we let $\overline{N}$ denote the Zariski closure in $M$ of the normalizer of $T$ in $G$.
In this case, the following sets are finite and they are in bijection with each other;
\begin{enumerate}
\item $B*$-orbits in $\overline{Q}$ (respectively, $B*$-orbits in $\overline{P}$), 
\item $T\times H$-orbits in $\tau^{-1} (\overline{N} \cap \overline{Q})$ (respectively, 
$T\times H$-orbits in $\tau^{-1} (\overline{N} \cap \overline{P})$).
\end{enumerate}
\end{Theorem}

The {\em Renner monoid} of $M$, defined by $R=\overline{N}/T$, 
is a generalization of the Weyl group of $G$, see~\cite{Renner86}. 
It is a finite inverse semigroup and $W$ is its group of invertible elements.
As a consequence of Theorem~\ref{T:Corollary}, 
we will show in the sequel that a certain subset 
of $R$ can be used for studying $B*$-orbits in $\overline{P}$. 
In some special cases this subset of $R$ give a parametrization of 
the full set of $B*$-orbits; see the examples in Section~\ref{S:Borel}.

\vspace{.5cm}

With hindsight, our first main result raises the question 
of finding antiinvolutions on reductive monoids. 
To answer this question, in our second main result, 
we focus on a particular subclass of reductive monoids.
A {\em semisimple monoid} is a reductive monoid 
which is normal, has a one dimensional center and 
a zero element. Interesting examples of such monoids 
include the cones over certain representations 
of semisimple groups.

Let $G_0$ be a semisimple algebraic group of adjoint type and let 
$\rho :G\rightarrow \mt{GL}(V)$ be 
a finite dimensional irreducible rational representation of $G_0$.
Let $Z_V$ denote the cone over $\rho(G)$ in $\mt{End}(V)$.
Then $Z_V$ has the structure of a reductive monoid. 
Let us mention that $Z_V$ is known to be normal 
as an algebraic variety if the representation $(\rho,V)$ is 
a minuscule representation in the sense of~\cite[Theorem 3.1]{DeConcini}.
(We will review De Concini's theorem in the preliminaries section.)
In the following result we denote by $G$
the reductive group of units in $Z_V$.

\begin{Theorem}\label{T:extension of involution}
Let $G_0$ be a complex semisimple algebraic group of adjoint type 
and let $\theta_0 \in \mt{Aut}(G_0)$ be an involutory algebraic 
group automorphism. 
If $(\rho,V)$ is a minuscule representation of $G_0$
with the highest weight $\omega$ such that 
$\theta_0^* \omega = -\omega$ and $Z_V$ is normal, 
then there exists a unique morphism 
$\theta_{an}: Z_V \rightarrow Z_V$ such that 
\begin{enumerate}
\item $\theta_{an} (xy) = \theta_{an}(y) \theta_{an}(x)$ for all $x,y\in Z_V$;
\item $\theta_{an}^2$ is the identity map on $Z_V$;
\item $\theta_{an} (g) = \theta(g)^{-1}$ for all $g\in G$, where $\theta$
is the unique extension of $\theta_0$ to $G$.
\end{enumerate}
\end{Theorem}

We conclude our introduction by giving a brief overview of our article. 
In Section~\ref{S:Preliminaries}, we set our notation and 
review some facts from the theory of linear algebraic monoids
and the representation theory of reductive algebraic groups. 
In Section~\ref{S:Special}, we prove our second main result, 
Theorem~\ref{T:extension of involution}. 
In Section~\ref{S:Borel}, we characterize the parametrizing 
sets of Borel orbits in $\overline{P}$. 
In particular we prove our Theorem~\ref{T:Corollary} in Section~\ref{S:Borel}.
Finally, we close our paper with some remarks in Section~\ref{S:Final}.


\section{Preliminaries}\label{S:Preliminaries}

Unless otherwise mentioned, all reductive groups are assumed to be connected
and all semigroups are defined over $\C$. The representations we consider here
are all rational and finite dimensional. 

The general linear group of invertible $n\times n$ matrices is denoted by $\mt{GL}_n$
and the monoid of $n\times n$ matrices is denoted by $\mt{Mat}_n$. 
The Lie algebra of a linear algebraic group $G$ is denoted by $\mt{Lie}(G)$.

Let $G$ be a reductive algebraic group,
$T$ be a maximal torus in $G$, and let 
$B$ be a Borel subgroup containing $T$. 
We use $X(T)$ to denote 
the character group of $T$, and we will
use $E$ to denote the real vector space 
$X(T) \otimes_\Z \R$. In addition, we 
fix the following notation:
\begin{align*}
\varPhi \subset E&: \text{ the set of weights of the adjoint representation};\\
\varDelta \subset \varPhi &: \text{ the set of simple roots determined by } (B,T);\\
\varPhi^+  \subset \varPhi &: \text{ the set of positive roots determined by } \varDelta;\\
\varLambda_r \subset X(T)&: \text{ the root lattice generated by $\varDelta$}.
\end{align*}

If $\alpha$ is a root from $\varPhi$, then the associated coroot,
$2\alpha / (\alpha, \alpha)$, is denoted by $\check{\alpha}$. Suppose that 
$\alpha_1,\dots, \alpha_n$ is the list of simple roots from $\varDelta$.
The set of {\em fundamental weights}, $\{\omega_1,\dots, \omega_n\}$ 
is the dual of the coroot basis $\{\check{\alpha_1},\dots,\check{\alpha_n}\}$ 
for the dual vector space $\mt{Lie}(T)^*$. 

Irreducible representations of $G$ are parametrized by the semigroup 
of dominant weights (with respect to $T$). A dominant weight $\lambda$
is called minuscule if $\langle \lambda , \check{\alpha} \rangle  \leq 1$ 
for all positive coroots $\check{\alpha}$.

The dominance partial order on weights is defined by
$\mu \preceq \lambda$ if and only if $\lambda-\mu$ is a positive 
linear combination of positive roots.

If $\lambda$ is a dominant weight, then we denote by 
$\it{\Sigma} (\lambda)$ the set of dominant weights $\mu$
such that $\mu \preceq \lambda$. The set $\it{\Sigma}(\lambda)$ is 
finite and it is called the saturation of $\lambda$.

\subsection{Reductive monoids.}\label{SS:background}

The purpose of this section is to introduce the notation of a reductive monoid. 
For details, see~\cite{Renner05,Putcha88}.
For a more recent exposition of the basic ideas
behind algebraic monoids we recommend Brion's article~\cite{Brion14} and
for the combinatorics of Renner monoids, we recommend~\cite{CaoLiLi14}.

Let $M$ be a linear algebraic monoid with 
the group of invertible elements $G$. 
The set of idempotents in $M$ is denoted
by $E(M)$. 
If $G$ is a reductive group and $M$ is an 
irreducible algebraic variety, then $M$ 
is called a reductive monoid. Note that 
there is no normality assumption on $M$.

Let $T$ be a maximal torus in $G$
and let $B$ be a Borel subgroup containing $T$. 
Clearly, $\overline{T}$ is a reductive 
and commutative submonoid of $M$.
As before, we denote by $R$ (resp. by $W$), the 
Renner monoid $\overline{N_G(T)}/T$ (resp. the Weyl 
group $N_G(T)/T$) of $M$ (resp. of $G$).

The ``generalized'' Bruhat-Chevalley order on $R$ is defined by 
\begin{equation}\label{E:BRordering}
\sigma \leq \tau\ \hspace{.51cm} \mbox{if and only if}\hspace{.51cm} 
B\sigma B \subseteq \overline{B\tau B}
\end{equation}
where $\tau$ and $\sigma$ are from $R$
and the bar on $B\tau B$ stands for the Zariski closure in $M$.

There is a canonical partial order $\leq$ 
on the set of idempotents $E(\overline{T})$ of $\overline{T}$
defined by  
\begin{equation}\label{E:crossorder}
e\leq f \hspace{.51cm} \mbox{if and only if} \hspace{.51cm}  ef=e=fe.
\end{equation}
Notice that $E(\overline{T})$ is invariant under the conjugation 
action of the Weyl group $W$. 
A subset $\Lambda \subseteq E(\overline{T})$ is called a cross-section lattice (or, a {\em Putcha lattice})
if $\Lambda$ is a set of representatives for the $W$-orbits on $E(\overline{T})$ and the bijection 
$\Lambda \rightarrow G \backslash M / G$ defined by 
$e \mapsto GeG$ is order preserving.
There is a close relationship between cross-section 
lattices and Borel subgroups. 
The {\em right centralizer of $\Lambda$ in $G$}, denoted by $C_G^r(\Lambda)$, is the subgroup 
$$
C_G^r(\Lambda) = \{ g\in G:\ ge = ege \text{ for all } e\in \Lambda\}.
$$	
Assuming that $M$ has a zero, for all Borel subgroups of $G$ containing $T$ 
the set $\Lambda(B) = \{ e\in E(\overline{T}):\ Be=eBe\}$ is a cross-section lattice 
with $B= C_G^r(\Lambda)$, and for any cross-section lattice $\Lambda$, the 
right centralizer $C_G^r(\Lambda)$ is a Borel subgroup containing $T$ with 
$\Lambda = \Lambda (C_G^r(\Lambda))$. See~\cite[Theorem 9.10]{Putcha88}.
	
\vspace{.25cm}

The decomposition $M= \bigsqcup_{e\in \Lambda} GeG$ into 
$G\times G$ orbits has a finite counterpart; $R = \bigsqcup_{e\in \Lambda} WeW$.
Moreover, the partial order (\ref{E:crossorder}) on 
$\Lambda$ agrees with the order induced from 
Bruhat-Chevalley order (\ref{E:BRordering}).

\vspace{.25cm}

$E(\overline{T})$ is a relatively complemented lattice, anti-isomorphic 
to a face lattice of a convex polytope. For $\overline{T}$ contained in a 
$J$-irreducible  monoid, the associated polytope is described explicitly in 
Section~\ref{SS:Jirreducible}.
Let $\Lambda$ be a cross section lattice in $E(\overline{T})$.  
The Weyl group of $T$ (relative to $B= C_G^r(\Lambda)$) acts on $E(\overline{T})$, 
and furthermore
\begin{equation*}
E(\overline{T})= \bigsqcup_{w\in W} w \Lambda w^{-1}.
\end{equation*}

Let $S$ be a semigroup and let $M=S^1$ be the monoid obtained from 
$S$ by adding a unit element if it is not already present. 
Let $a,b\in M$. The following are four of the five equivalence relations 
which are collectively known as Green's relations.
They are of utmost importance for semigroup theory. 
\begin{enumerate}
\item $a\ \ms{L}\ b$ if $Ma=Mb$.
\item $a\ \ms{R}\ b$ if $aM=bM$.
\item $a\ \ms{J}\ b$ if $MaM=MbM$.
\item $a\ \ms{H}\ b$ if $a\ \ms{L}\ b$ and $a\ \ms{R}\ b$.
\end{enumerate}
It turns out that the unit group $G$ of a reductive monoid $M$ is big in the sense that 
$a\ \ms{L}\ b$ if $Ga=Gb$, $a\ \ms{R}\ b$ if $aG=bG$, and $a\ \ms{J}\ b$ if $GaG=GbG$
(see~\cite[Proposition 6.1]{Putcha88}).
Furthermore, a cross section lattice is a representative for the set of $\ms{J}$-classes in $M$.
A reductive monoid $M$ is called $J$-irreducible if $M$ has a unique, nonzero, minimal $G\times G$-orbit. 

\vspace{.5cm}

We continue with the assumption that $M$ is a reductive group with unit group $G$. 
Among the important submonoids of $M$ are those of the form $eMe$ ($e\in E(\overline{T})$). 
Let $C_G(e)$ denote the centralizer of $e$ in $G$. If $e$ is from the cross-section
lattice $\Lambda$, then $eC_G(e)$ is the unit group of $eMe$.
In the sequel, we will need the following fact, also.
\begin{Lemma}\label{L:H-class}
Let $e\in E(\overline{T})$ be an idempotent 
and let $H$ denote its $\ms{H}$-class, that is $H= e C_G(e)$. 
Let $B$ be a Borel subgroup of $G$ containing $T$, hence $e\in E(\overline{B})$. 
In this case, $C_B(e)$ and $eBe = e C_B(e)$, respectively, are 
Borel subgroups of $C_G(e)$ and $H$. 
\end{Lemma}
For the proofs of the facts that are stated in the previous paragraph
as well as for the proof of the lemma, see~\cite[Corollary 7.2]{Putcha88}.

\subsection{$J$-irreducible  monoids.}\label{SS:Jirreducible}

Let $G_0$ denote a semisimple linear algebraic 
group of adjoint type, $T_0$ be a maximal torus
in $G_0$. 
If $(\rho_0,V)$ is a representation of $G_0$, 
then the group $\C^*\cdot \rho_0(G_0)$, 
which we denote by $G$, is reductive. 
If $\rho_0$ is faithful, then up to isomorphism 
$T_0$ and $\rho_0(T_0)$ 
differ by a finite set of central elements.
In this case, when there is no danger of confusion, 
we will denote the image $\rho_0(T_0)$ by $T_0$.

Let $T\subseteq G$ be a maximal torus containing $T_0$, 
and let $\mbf{T}\subset \mt{GL}(V)$ 
denote an $n$-dimensional maximal torus containing $T$. 
(Here, $n=\dim V$.)
Accordingly, we have a nested sequence of Euclidean spaces: 
$$
E_0=X(T_0)\otimes_\Z \R \subset E=X(T)\otimes_\Z \R \subset \mbf{E}=X(\mbf{T})\otimes_\Z \R.
$$
Note that $\dim E = \dim E_0 +1$.

If $\varepsilon_i$ (for $i=1,\dots, n$) denotes 
the standard $i$-th coordinate function on $\mbf{T}$, 
then $\{ \varepsilon_1,\dots, \varepsilon_n \}$ is a basis for $\mbf{E}$, and $E$ is spanned 
by the restrictions $\varepsilon_i |_T$, $i=1,\dots, n$. 
Let $\chi \in X(T)$ denote the restriction of the character whose $n$-th power 
is the determinant on $\mt{GL}(V)$. Stated differently in additive notation, $\chi$ is the restriction to 
$T$ of the rational character $\frac{1}{n}(\varepsilon_1+\cdots + \varepsilon_n)$. 
We denote $\varepsilon_i |_T$ by $\chi_i$ and set 
$$
\widetilde{\chi}_i :=\chi_i - \chi\ \text{ for } i=1,\dots, n.
$$

If $K$ is an arbitrary group, then the center of $K$ is customarily denoted by $Z(K)$. 
In our case, since $Z(G)= \C^*\cdot Z(G_0)$, the character group of $Z(G)$ is generated 
by one element, which is $\chi$. Thus, $E = \R \chi \oplus E_0$. In fact, $\chi$ vanishes on $T_0$. 
It follows from these observations that 
\begin{enumerate}
\item $\{ \widetilde{\chi}_1,\dots, \widetilde{\chi}_{n-1},\chi\}$ spans $E$;
\item if $x\in T$ lies in $T_0\subset T$, then $\widetilde{\chi}_i|_{T_0} (x) = \chi_i (x)$;
\item $\{\widetilde{\chi}_1,\dots, \widetilde{\chi}_{n-1}\}$ spans $E_0$.
\end{enumerate}

In this paper, we are interested in the $J$-irreducible  monoids that come from a faithful representation.
\begin{Definition}\label{D:$J$-irreducible }
Let $(\rho_0,V)$ be a faithful, irreducible representation of $G_0$. 
The {\em $J$-irreducible  monoid associated with $(\rho_0,V)$} is the affine variety 
$\overline{\C^*\cdot \rho_0(G_0)}$ together with its monoid structure induced from $\mt{End}(V)$. 
\end{Definition}
\begin{Remark}
The definition of a $J$-irreducible  monoid which is given at the end of Section~\ref{SS:background} 
agrees with Definition~\ref{D:$J$-irreducible }, see Lemma 7.8 of \cite{Renner05}.
\end{Remark}

The representation $(\rho_0,V)$ of $G_0$ gives a representations for $G$ by the following action:
\begin{align}\label{A:family of reps}
zg \cdot v = z \rho_0(g)(v) \in V,
\end{align} 
where $z\in \C^*$, $g\in G_0$, and $v\in V$. 
Another such simple but useful observation is that, since $G_0 \stackrel{\rho_0}{\cong} (G,G)$,
the Weyl group $W$ of $(G_0,T_0)$ is isomorphic to that of the pair $(G,T)$.

\begin{Lemma}\label{L:convex hull}[Proposition 3.5 in \cite{Renner85}]
Let $(\rho_0,V)$ be an irreducible representation of $G_0$. 
If $\lambda$ is the $T_0$-highest weight of $(\rho_0,V)$ and $\mc{P}$ denotes the convex hull of 
$\{ w\cdot (\chi+ \lambda):\ w\in W \} = \{\chi+w \cdot \lambda:\ w\in W \}$, then the set of weights of $T$ 
with respect to~(\ref{A:family of reps}) is contained in $\mc{P}$.
\end{Lemma}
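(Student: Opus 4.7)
The plan is to reduce the statement to the classical convex-hull theorem for the weights of an irreducible representation, transferred from $T_0$ to $T$ via the splitting $E = \mathbb{R}\chi \oplus E_0$ recorded at the end of Section~\ref{SS:Jirreducible}.

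First I would compute the $T$-weights of $V$ under the action (\ref{A:family of reps}) with $k=1$. Every $t \in T$ can be written, modulo the finite subgroup $\mathbf{k}^* \cap \rho_0(T_0)\subseteq Z(\rho_0(G_0))$ which is harmless for weight calculations, as $zt_0$ with $z\in \mathbf{k}^*$ and $t_0\in T_0$. For a $T_0$-weight vector $v_\mu$ of weight $\mu$, the formula $zg\cdot v = z\rho_0(g)v$ gives
\begin{equation*}
t\cdot v_\mu \;=\; z\,\mu(t_0)\,v_\mu,
\end{equation*}
so as a character of $T$ this value is $\chi + \mu$, where $\mu \in X(T_0)\subseteq E_0$ is identified with its extension to $T$ that vanishes on the central $\mathbf{k}^*$. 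Hence the set of $T$-weights of $V$ equals $\chi + \{\mu : \mu \text{ is a } T_0\text{-weight of } V\}$.

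Next I would invoke the classical fact (for example, Theorem~21.3 of \cite{Humphreys78}) that every $T_0$-weight $\mu$ of the irreducible $G_0$-representation $V$ of highest weight $\lambda$ lies in the convex hull of $W\cdot\lambda$ inside $E_0$. To finish I need that $\chi$ is $W$-invariant: indeed $W\cong N_{G_0}(T_0)/T_0$ acts on $T=\mathbf{k}^*\cdot T_0$ by conjugation, fixing the central factor $\mathbf{k}^*$ pointwise, and since $\chi$ vanishes on $T_0$ it factors through the $\mathbf{k}^*$-projection and is therefore $W$-fixed. Consequently
\begin{equation*}
\chi + \mathrm{Conv}(W\cdot\lambda) \;=\; \mathrm{Conv}(\{\chi + w\lambda:w\in W\}) \;=\; \mathrm{Conv}(\{w(\chi+\lambda):w\in W\}) \;=\; \mathcal{P},
\end{equation*}
which combined with the previous step yields the desired containment.

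The only subtlety is the bookkeeping between $X(T_0)$ and $X(T)$ governed by the direct sum decomposition $E = \mathbb{R}\chi \oplus E_0$ together with the $W$-invariance of $\chi$; once these are clarified the result follows immediately from the standard highest-weight convex-hull theorem, so I expect no serious obstacle.
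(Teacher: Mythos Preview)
Your argument is correct. The paper does not supply a proof of this lemma at all; it is simply quoted as Proposition~3.5 of \cite{Renner85}, so there is nothing to compare your approach against. Your reduction to the classical statement that the weights of an irreducible representation lie in the convex hull of the Weyl orbit of the highest weight, combined with the translation by the $W$-invariant character $\chi$ coming from the splitting $E = \mathbb{R}\chi \oplus E_0$, is exactly the natural route and handles the bookkeeping cleanly.
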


Next, we briefly review a result of De Concini on the the normality of 
the $J$-irreducible monoids. Let $\lambda$ be a dominant weight for $G_0$ and let 
$(\rho_0,V)$ denote the corresponding irreducible representation of $G_0$. 
We define $(\eta,W_\lambda)$ as the following sum of irreducible representations 
of $G_0$:
$$
W_\lambda := \bigoplus_{\mu \in \it{\Sigma}(\lambda) } V(\mu).
$$
Here $V(\mu)$ stands for the irreducible representation of $G_0$ 
with highest weight $\mu$. Finally, we set 
$$
Z_\lambda:= Z_V \qquad \text{and}\qquad \mc{Z}_\lambda := Z_{W_\lambda},
$$
where $Z_{W_\lambda}$ is the cone over $\eta(W_\lambda)$ in $\mt{End}(W_\lambda)$.

\begin{Theorem}(De Concini~\cite[Theorem 3.1]{DeConcini})\label{T:DeConcinis}
1) $\mc{Z}_\lambda$ is a normal variety with rational singularities.
2) If $V$ is a $G_0$-module of highest weight $\lambda$, then $\mc{Z}_\lambda$ is 
the normalization of $Z_V$ and it is equal to $Z_V$ if and only if $W_\lambda$ is a 
subrepresentation of $V$. In particular, $\mc{Z}_\lambda$ is a normalization of $Z_V$
and it is equal to $Z_V$ if and only if $\lambda$ is minuscule. 
\end{Theorem}

\section{A proof of Theorem~\ref{T:extension of involution}}\label{S:Special}

Let $G_0$ be a semisimple algebraic group 
of adjoint type, $\theta_0$ be an involutory 
linear algebraic group automorphism of $G_0$. 
Let $(T_0,B_0)$ be a $\theta_0$-stable 
pair of a maximal torus $T_0$ and a Borel subgroup
$B$ such that $T_0\subset B_0$. 
The {\em isotropic subtorus} $T_0'$, and the 
{\em anisotropic subtorus} $T_1'$ are defined by
\begin{align*}
T_0' = \{ t\in T_0:\ \theta(t) = t\},\qquad
T_1' = \{ t\in T_0:\ \theta(t) = t^{-1} \}.
\end{align*} 
The multiplication map $T_1' \times T_0' \rightarrow T_0$ 
is an {\em isogeny}, that is to say 
a surjective homomorphism with a finite kernel.

Among all $\theta$-stable maximal tori, 
we work with the one for which the dimension 
$l:=\dim T_1'$ is maximal. The integer $l$ is 
called the rank of the symmetric variety $G_0/G_0^\theta$.
($G_0^\theta$ is the fixed subgroup of $\theta$.)

Let $\varPhi$ denote the set of roots of $G_0$ relative to $T_0$. 
Passing to the Lie algebra setting by differentiation, we view $\varPhi$ as a subset of the 
dual vector space $\mt{Lie}(T_0)^*$ of the Lie algebra of $T_0$.	
Since $\theta$ is an automorphism of $T_0$, it induces a linear map 
$$
\theta^* : \ \mt{Lie}(T_0)^* \rightarrow \mt{Lie}(T_0)^*,
$$
which, in turn induces an involution on $\varPhi$. 
Define 
\begin{align*}
\varPhi_0 &= \{ \alpha \in \varPhi:\ \theta^*(\alpha) = \alpha \},\\
\varPhi_1 &= \varPhi - \varPhi_0.
\end{align*} 

\begin{Lemma}[Lemma 1.2, \cite{DeConciniProcesi83}]\label{DeConciniProcesi83 Lemma 1.2}
There exists a system of positive roots $\varPhi^+ \subseteq \varPhi$ such that 
$\theta^*(\alpha) \in \varPhi- \varPhi^+$ for all $\alpha \in \varPhi^+ \cap \varPhi_1$.
\end{Lemma}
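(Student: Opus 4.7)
The plan is to produce $\Phi^+$ by choosing a generic linear functional on the real span of $\Phi$ whose "isotropic part" dominates, so that any root in $\Phi_1$ gets flipped across $0$ when we apply $\theta^*$.

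First I would note, working in $\mathfrak{t} = \mt{Lie}(T)$, that $d\theta$ is an involution with $+1$ eigenspace $\mathfrak{t}_0 = \mt{Lie}(T_0)$ and $-1$ eigenspace $\mathfrak{t}_1 = \mt{Lie}(T_1)$, so $\mathfrak{t} = \mathfrak{t}_0 \oplus \mathfrak{t}_1$. Dually, for any $\alpha \in \varPhi$ and any $h = h_0 + h_1 \in \mathfrak{t}_{\R}$,
\begin{equation*}
(\theta^*\alpha)(h) = \alpha(d\theta(h)) = \alpha(h_0) - \alpha(h_1).
\end{equation*}
In particular $\alpha \in \varPhi_0$ precisely when $\alpha|_{\mathfrak{t}_1} = 0$, i.e. $\alpha$ depends only on the $\mathfrak{t}_0$-component.

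Next, since the restriction $\alpha|_{\mathfrak{t}_1}$ is a nonzero linear functional on $\mathfrak{t}_1$ for every $\alpha \in \varPhi_1$, I would choose $h_1 \in (\mathfrak{t}_1)_{\R}$ outside the finitely many hyperplanes $\ker(\alpha|_{\mathfrak{t}_1})$, $\alpha \in \varPhi_1$, so that $\alpha(h_1) \neq 0$ for every $\alpha \in \varPhi_1$. Similarly choose $h_0 \in (\mathfrak{t}_0)_{\R}$ so that $\alpha(h_0) \neq 0$ for every $\alpha \in \varPhi_0$ (such $\alpha$'s live in $\mathfrak{t}_0^*$, so this is again a generic choice). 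Then set
\begin{equation*}
h = h_0 + \lambda\, h_1, \qquad \lambda > \max_{\alpha \in \varPhi_1} \frac{|\alpha(h_0)|}{|\alpha(h_1)|},
\end{equation*}
and declare $\varPhi^+ = \{\alpha \in \varPhi : \alpha(h) > 0\}$. The choice of $\lambda$ (with $\varPhi$ finite) is the only quantitative step, and it is what ensures the construction actually does what we want.

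To verify the conclusion: for $\alpha \in \varPhi_0$, $\alpha(h) = \alpha(h_0) \neq 0$, so $\varPhi^+$ is a well-defined positive system on this piece; for $\alpha \in \varPhi_1$, the choice of $\lambda$ forces $\mt{sign}(\alpha(h)) = \mt{sign}(\alpha(h_1))$ and $\mt{sign}((\theta^*\alpha)(h)) = -\mt{sign}(\alpha(h_1))$, so $\alpha \in \varPhi^+$ implies $\theta^*(\alpha) \notin \varPhi^+$, which is exactly the assertion. The one thing to flag as the main subtlety is precisely the genericity/scaling argument that balances $h_0$ and $\lambda h_1$; everything else is bookkeeping of the eigenspace decomposition of $d\theta$.
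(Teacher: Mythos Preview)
Your argument is correct and is essentially the standard proof given in the cited reference \cite{DeConciniProcesi83}: one chooses a regular element in $\mt{Lie}(T)_\R$ whose $\mathfrak{t}_1$-component dominates, so that the sign of $\alpha$ on this element is governed by $\alpha|_{\mathfrak{t}_1}$ for every $\alpha\in\varPhi_1$, and hence flips under $\theta^*$. The paper itself does not reprove the lemma but simply quotes it; your version (scaling $h_1$ up rather than shrinking $h_0$, which is equivalent) recovers exactly that argument.
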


We fix a set of positive roots $\varPhi^+$ as in Lemma \ref{DeConciniProcesi83 Lemma 1.2}.
Let $\varDelta$ denote the associated set of simple roots and set 
\begin{align*}
\varDelta_0 &= \varPhi_0 \cap \varDelta,\\
\varDelta_1 &= \varPhi_1 \cap \varDelta. 
\end{align*} 
Observe that $|\varDelta_1| \geq \dim T_1'=l$. 	
It turns out that there exists an ordering $\{\alpha_1,\dots, \alpha_j\}$ of the elements of $\varDelta_1$ 
such that the differences $\alpha_ i - \theta^*(\alpha_i)$ are mutually distinct for $i=1,\dots,l$, and 
for each $i\in \{ l+1,\dots,j\}$, there exists an index $s\in \{1,\dots, l\}$ such that 
$\alpha_ i - \theta^*(\alpha_i) =  \alpha_ s - \theta^*(\alpha_s)$.
See~\cite[Section 1.4]{DeConciniProcesi83}.
A {\em restricted simple root} $\overline{\alpha}$ is a weight of the form  
\begin{align*}
\oa= \frac{\alpha_i - \theta^*( \alpha_i)}{2} \text{  for some } i\in \{1,\dots, l\}.
\end{align*}
In this case, we denote $\oa$ by $\oa_i$, and denote by 
$\overline{\varDelta_1}=\{\oa_1,\dots,\oa_l\}$ the set of all restricted simple roots.

Suppose now that $\varDelta_0 = \{\beta_1,\dots, \beta_k\}$. 
In accordance with the partitioning $\varDelta= \varDelta_0 \sqcup \varDelta_1$, 
we divide the set of fundamental weights of $\varDelta$ into two disjoint subsets
$\{\omega_1,\dots, \omega_j\} \sqcup \{ \zeta_1,\dots, \zeta_k\}$ so that for each 
$i\in \{1,\dots, j\}$ the following equalities hold true: 
\begin{center}
$(\omega_i, \beta_s^\lor) = 0$ for $s=1,\dots,k$, and $(\omega_i , \alpha_r^\lor) = \delta_{i,r}$ 
for $r=1,\dots, j$.
\end{center}
Similarly for $\zeta_i$'s.
As it is shown in \cite{DeConciniProcesi83} (in the pages 5 and 6), $\theta^*$ induces an involution 
$\widetilde{\theta}$ on the indices $\{1,\dots, j\}$ such that 
$\theta^*( \omega_i ) = - \omega_{\widetilde{\theta}(i)}$.
Thus, we arrive at a crucial definition for our purposes: 
\vspace{.25cm}

\begin{Definition}\label{D:special weight}
A dominant weight $\lambda$ of $G_0$ is called special (or, $\theta$-special), 
if $\theta^*(\lambda) = - \lambda$. 
If $(\rho,V)$ is an irreducible representation with a $\theta$-special highest weight, then 
we call $\rho$ a $\theta$-special representation of $G_0$. 
\end{Definition}

Now, let $\theta_0: G_0\rightarrow G_0$ be an involutory automorphism on $G_0$. 
We choose a $\theta_0$-stable maximal torus $T_0$ in $G_0$. 
Let $\lambda$ be a special, dominant weight with the corresponding irreducible representation $(\rho_0,V)$.
Assume also that $(\rho_0,V)$ is faithful. 
As before, we define the reductive group $G$ by setting $G=\C^*\cdot \rho_0(G_0) \subset \mt{GL}(V)$.
We claim that there exists an ``extension'' $\theta :G\rightarrow G$ of $\theta_0$ to $G$. 
To this end we define 
\begin{align}\label{A:extension of theta}
\theta (c\rho_0(g))= c^{-1} \rho_0(\theta_0(g)), \qquad g\in G_0,\ c\in \C^*.
\end{align}
To prove that $\theta$ is well defined, suppose $g,g'\in G_0$ and $c,c'\in \C^*$ are such that 
$c\rho_0(g) = c'\rho_0(g')$. Let $\alpha \in \C^*$ denote $cc'^{-1}$. 
Then $\rho_0(g^{-1} g') = \alpha 1_{\mt{GL}(V)} \in \mt{GL}(V)$.
But $G_0$ is of adjoint type, $\rho_0$ is faithful, and $\alpha$ is a central element in $\mt{GL}(V)$. 
Therefore, $\alpha = id$, hence $g=g'$ and $c=c'$. 
Finally, note that 
\begin{align*}
\theta(\theta (c\rho_0(g))) &= \theta(c^{-1} \rho_0(\theta_0(g)))\\
&=c \rho_0(\theta_0(\theta_0(g)))\\
&=c \rho_0(g)\ \text{ for all } c\in \C^* \text{ and } g\in G_0.
\end{align*}

The {\em antiinvolution} corresponding to $\theta$, by definition, is 
the composition $\theta_{an}:= \theta \circ \iota$ of $\theta$ with the 
``inverting'' morphism $\iota:\ g\mapsto g^{-1}$. 
The map induced by $\theta_{an}$ on the character group $X(T)$ is denoted $\theta_{an}^*$.  
Then $\theta^*$ and $\theta_{an}^*$ are related to each other by 
$$
\theta_{an}^*(\chi) = - \theta^*(\chi)\ \text{ for } \chi \in X(T).
$$  
In particular, if $\theta^* ( \lambda ) = - \lambda$, then $\theta_{an}^* (\lambda) = \lambda$.

We are ready to prove Theorem~\ref{T:extension of involution}. 
Let us paraphrase it for completeness:
If $M$ is a normal $J$-irreducible monoid that is obtained from 
a $\theta$-special minuscule representation of $G$, 
then there exists a unique morphism $\theta_{an}:M\rightarrow M$ such that 
\begin{enumerate}
\item $\theta_{an} (xy) = \theta_{an}(y) \theta_{an}(x)$ for all $x,y\in M$;
\item $\theta_{an}^2$ is the identity map on $M$;
\item $\theta_{an} (g) = \theta(g)^{-1}$ for all $g\in G$, where $\theta$ is the 
involution that is extended from $\theta_0$ on $G_0$.
\end{enumerate}

\begin{proof}[Proof of Theorem~\ref{T:extension of involution}.]
Since $\theta_{an}$ agrees with $\theta$ (after composing with $\iota$, of course) on $G$, 
the uniqueness is clear. 
We are going to show that $\theta_{an}$ extends to whole $J$-irreducible monoid $M:=Z_V$ associated 
to an irreducible representation $(\rho_0,V)$ of $G_0$ with the highest weight $\lambda$. Let $\rho$
denote the representation of $G$ as defined in (\ref{A:family of reps}).

First, we note that, by Theorem~\ref{T:DeConcinis}, $M$ is a normal reductive monoid. 
Let $T_0$ denote the maximal torus of $G_0$ such that $T= \C^*\cdot \rho_0(T_0)$, 
and let $\langle \Pi(\rho) \rangle$ denote the submonoid of $X(T)$ generated by the weights $\Pi(\rho)$ of $T$. 
The coordinate ring of the affine torus embedding $\overline{T}$ is equal to the 
monoid-ring $R= \C [ \langle \Pi(\rho) \rangle ]$ (see [Lemma 3.2,~\cite{Renner85}]). 
Therefore, $\overline{T} = \mt{Spec}(R)$.
On the other hand, by Lemma~\ref{L:convex hull}, we know that $\Pi(\rho)$ is 
contained in the convex hull $\mc{P}$ of $W\cdot (\lambda+\chi)$, where $\chi$ is the $n$-th 
root of the determinant on $\mt{GL}(V)$.

Since $\lambda$ is special, by [Lemma 1.6,~\cite{DeConciniProcesi83}], there is a $G$-isomorphism 
$V^\theta \simeq V^*$, hence $\theta^*(\Pi(\rho)) = \Pi(\rho^*)=-\Pi(\rho)$ 
and it follows $\theta^*_{an}(\Pi(\rho))=\Pi(\rho)$.	
In particular, it induces an antiinvolution $\theta_{an}$ on $\overline{T} = \mt{Spec}(R)$.
Since $\theta \circ \iota = \theta_{an}$ on $T$, by the ``extension principle'' (see [Corollary 4.5,~\cite{Renner85}]), 
there exists a unique morphism $\theta_{an} : M\rightarrow M$, which agrees with $\theta \circ \iota$ on $G$, 
and agrees with $\theta_{an}$ on $\overline{T}$. 	
Since $\theta_{an}^2 =id$ on $G$, and since $G$ is dense on $M$, we see that $\theta_{an}^2= id$ on $M$.
Finally, since $(x,y) \mapsto \theta_{an}(xy)$ and $(x,y) \mapsto \theta_{an}(y) \theta_{an}(x)$ 
are morphisms from $M \times M$ into $M$ agreeing on the open dense set 
$G\times G$, they agree everywhere. 
	
\end{proof}

\section{A proof of Theorem~\ref{T:Corollary}}\label{S:Borel}

We start with providing the details of some useful facts which we briefly 
mentioned earlier in Section~\ref{SS:background}.

\begin{Lemma}[Generalized Bruhat-Chevalley decomposition, \cite{Renner86}]\label{BCR}
Let $M$ be a reductive monoid with the group of invertible elements $G$, and let $T\subseteq B$ 
be a maximal torus contained in a Borel subgroup. Let $\overline{N}$ denote the closure in $M$ 
of the normalizer $N= N_G(T)$ of $T$. If $m\in M$, then there exist $b_1,b_2 \in B$, and 
$\overline{n} \in \overline{N}$ such that $m= b_1 \overline{n} b_2$. 
This leads to the Bruhat-Chevalley decomposition of $M$:
\begin{align}\label{A:disjoint}
M= \bigcup_{\dot{\overline{n}} \in R} B \overline{n} B,
\end{align}
where the union is disjoint and $R=\overline{N}/T$ is the Renner monoid of $M$.
\end{Lemma}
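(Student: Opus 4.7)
The plan is to bootstrap from the classical Bruhat decomposition of $G$, using the $\mathcal{J}$-class structure recalled in Section~\ref{SS:background} to handle the non-invertible part of $M$. Given $m\in M$, I would first invoke the $G\times G$-orbit decomposition $M=\bigsqcup_{e\in\Lambda}GeG$ (Proposition~6.1 of \cite{Putcha88}) for the cross-section lattice $\Lambda=\Lambda(B)$ determined by the chosen Borel, and pick the unique $e\in\Lambda$ with $m\in GeG$. Writing $m=g_1 e g_2$ and applying the classical Bruhat decomposition $G=\bigsqcup_{w\in W}BwB$ to each $g_i$, I may take $g_1=b_1 \dot w_1 u$ and $g_2=u'\dot w_2 b_2$ with $\dot w_i\in N_G(T)$, $b_i\in B$, and $u,u'\in U$ (the unipotent radical of $B$). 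Thus
\[
m = b_1 \,\dot w_1 (u e u')\,\dot w_2\, b_2,
\]
and the problem reduces to rewriting $\dot w_1 (u e u') \dot w_2$ as an element of $B\overline{N}B$.

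The middle factor is the place where the cross-section lattice does its work. From the defining property $be=ebe$ for all $b\in B$ and $e\in \Lambda(B)$, applied to $u\in U\subseteq B$, one immediately gets $ue=eue$, so $ueu'=e(ueu')$. To absorb $u'$ on the right, I would decompose $u'$ along the parabolic $P_e^-=\{g\in G: eg=ege\}$ attached to $e$ (the opposite of the parabolic governing the $\mathcal{H}$-class $H_e=eC_G(e)$ mentioned in Section~\ref{SS:background}): the part of $u'$ that left-centralizes $e$ commutes past $e$ on the right, while the complementary unipotent piece, after conjugation by $\dot w_2$, is absorbed into $B$. The resulting identity has the shape $\dot w_1 (u e u') \dot w_2=\beta_1\,(\dot w_1 e\dot w_2)\,\beta_2$ with $\beta_i\in B$, and since $\overline{N}$ is stable under left and right multiplication by $N=N_G(T)$ and contains $\overline{T}\supseteq E(\overline{T})\ni e$, the central element $\dot w_1 e\dot w_2$ lies in $\overline{N}$. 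Combining yields $m=b\,\overline{n}\,b'$ as required.

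For the disjointness of the union $M=\bigcup_{\overline{n}\in R}B\overline{n}B$, I would combine two ingredients already collected in Section~\ref{SS:background}: first, the decomposition $R=\bigsqcup_{e\in\Lambda}WeW$, which implies that the idempotent $e\in \Lambda$ with $B\overline{n}B\subseteq GeG$ is an invariant of the $B\times B$-double coset; second, the uniqueness in the classical group Bruhat decomposition applied within the $\mathcal{J}$-class $GeG$, which forces $\overline{n}_1 T=\overline{n}_2 T$ in $R$ whenever $B\overline{n}_1 B=B\overline{n}_2 B$. The finiteness of the indexing set is then built in, because both $\Lambda$ and each $W$-orbit $WeW$ are finite.

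The main obstacle is the middle simplification: cleanly pushing $u'$ across $e$ requires invoking the structural fact that $e\in\Lambda$ carries an associated parabolic with a Levi decomposition compatible with $H_e=eC_G(e)$, so that $u'$ splits into a component commuting with $e$ and a component that the outer Weyl representative carries into $B$. This is the genuinely non-classical ingredient—the rest of the argument is a direct assembly of the $\mathcal{J}$-class decomposition, the cross-section lattice property $Be=eBe$, the partition $R=\bigsqcup_{e\in\Lambda}WeW$, and the classical Bruhat decomposition of $G$.
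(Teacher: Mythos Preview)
The paper does not prove this lemma at all: it is stated with a citation to \cite{Renner86} and used as a black box, so there is no proof in the paper to compare your attempt against.

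That said, your outline follows the standard route to this result (reduce to a cross-section idempotent via $M=\bigsqcup_{e\in\Lambda}GeG$, then apply Bruhat on $G$ on each side), but the ``middle simplification'' is not yet a proof. The assertion that ``the complementary unipotent piece, after conjugation by $\dot w_2$, is absorbed into $B$'' is unjustified: for an arbitrary $w_2\in W$, conjugation can send root subgroups of $U$ into $U^-$, so there is no reason the resulting factor lands in $B$. What actually makes the argument work is a more careful use of the parabolic pair $P_e=\{g\in G:ge=ege\}$ and $P_e^-=\{g\in G:eg=ege\}$ attached to $e\in\Lambda$, together with the Bruhat decomposition of the reductive quotient $H_e=eC_G(e)$ of $eMe$; one shows that $BeB\cdot BeB\subseteq \bigcup_{w}B(ew)B$ and then inducts, rather than trying to conjugate a single unipotent factor past $\dot w_2$ in one step. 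Your disjointness sketch has a similar soft spot: knowing that $e$ is determined by the $\mathcal{J}$-class is fine, but ``uniqueness in the classical Bruhat decomposition applied within $GeG$'' does not by itself give $\overline n_1T=\overline n_2T$; one needs the finer statement that the $B\times B$-orbits inside $GeG$ are in bijection with $WeW$ modulo the appropriate stabilizers, which is exactly what is being proved.
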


Fix an element $\overline{n} \in \overline{N}$, and let $V_{\overline{n}} \subseteq U$ 
denote the subgroup $V_{\overline{n}} = \{ u \in U :\ u \overline{n} B \subseteq \overline{n} B \}$. 
Then $V_{\overline{n}}$ is closed and $T$-stable under conjugation.	
Therefore, there exists a complementary subgroup 
\begin{align}\label{complementary first}
U_{\overline{n},1} = \prod_{U_\alpha \nsubseteq V_{\overline{n}}} U_\alpha.
\end{align}
Complementary in this context means that the product morphism 
$U_{\overline{n},1} \times V_{\overline{n}} \rightarrow U$ is an isomorphism of algebraic groups.

In a similar manner, let $Z_{\overline{n}}\subseteq U$ denote the closed subgroup 
$Z_{\overline{n}}= \{ u \in U :\ \overline{n}T u = \overline{n}T\}$. 
Also in this case, $Z_{\overline{n}}$ is $T$-stable under conjugation;
let 
\begin{align}\label{complementary second}
U_{\overline{n},2} = \prod_{U_\alpha \nsubseteq Z_{\overline{n}}} U_\alpha
\end{align}
denote its complementary subgroup. 
The precise structure of the orbit $B\overline{n}B$ is exhibited in the next result:

\begin{Lemma}[Lemma 13.1, \cite{Renner05}]\label{BCR uniqueness}
The product morphism $U_{\overline{n},1} \times \overline{n} T \times U_{\overline{n},2} 
\rightarrow B \overline{n} B$ 
is an isomorphism of varieties.
\end{Lemma}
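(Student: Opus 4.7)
The plan is to establish bijectivity of the product morphism and then upgrade bijectivity to an isomorphism of varieties. I would organize the argument in three steps.

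\textit{Step 1: Structural facts.} First, I would verify the identity $T\overline{n}=\overline{n}T$ inside $\overline{N}$: writing $\overline{n}=ne$ with $n\in N_G(T)$ and $e\in E(\overline{T})$, one has $t\overline{n}=tne=n(n^{-1}tn)e=ne(n^{-1}tn)=\overline{n}(n^{-1}tn)$, using that $T$ and $\overline{T}$ commute and $n^{-1}tn\in T$. Next, I would show that $V_{\overline{n}}$ and $Z_{\overline{n}}$ are closed $T$-stable subgroups of $U$. Closedness is clear; $T$-stability uses $T\overline{n}=\overline{n}T$ together with $T\subseteq B$; to see that $V_{\overline{n}}$ is closed under inversion one argues that $v\overline{n}B$ and $\overline{n}B$ are images of $B$ under morphisms of equal fibre dimension, so the containment $v\overline{n}B\subseteq\overline{n}B$ forces equality by irreducibility, and a symmetric argument applies to $Z_{\overline{n}}$. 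Because a $T$-stable closed connected subgroup of $U$ is the product of the root subgroups it contains, this yields the direct-product decompositions of varieties
\[
U\;\cong\;U_{\overline{n},1}\times V_{\overline{n}}\;\cong\;U_{\overline{n},2}\times Z_{\overline{n}},
\]
and the useful reformulations $v\in V_{\overline{n}} \iff v\overline{n}\in\overline{n}B$ and $z\in Z_{\overline{n}} \iff \overline{n}z\in\overline{n}T$.

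\textit{Step 2: Surjectivity.} Given $b_1\overline{n}b_2\in B\overline{n}B$, write $b_1=ut_1$ with $u\in U$, $t_1\in T$, and decompose $u=u_1v$ with $u_1\in U_{\overline{n},1}$, $v\in V_{\overline{n}}$. Using $T\overline{n}=\overline{n}T$, push $t_1$ past $\overline{n}$, and absorb $v$ into the right-hand $B$-factor via $v\overline{n}\in\overline{n}B$, producing an expression $u_1\overline{n}b$ with $b\in B$. Writing $b=t_2u'$ and decomposing $u'=zu_2$ with $z\in Z_{\overline{n}}$, $u_2\in U_{\overline{n},2}$, use the $T$-stability of $Z_{\overline{n}}$ and $U_{\overline{n},2}$ to commute $t_2$ appropriately, and finally absorb $z$ via $\overline{n}z\in\overline{n}T$. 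After a final reparameterization $u_2\mapsto t^{-1}u_2 t$, the element is of the required form $u_1\,(\overline{n}t)\,u_2$.

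\textit{Step 3: Injectivity and isomorphism.} If $u_1(\overline{n}t)u_2=u_1'(\overline{n}t')u_2'$, apply the product decomposition $U=U_{\overline{n},1}\cdot V_{\overline{n}}$ to $u_1^{-1}u_1'$ and $U=Z_{\overline{n}}\cdot U_{\overline{n},2}$ to $u_2u_2'^{-1}$, then track the identity across $\overline{n}$ using the reformulations from Step~1. This reduces the problem to finding a $v\in V_{\overline{n}}$ whose image in the decomposition's first factor lies in $U_{\overline{n},1}$; the uniqueness of the factorization $U\cong U_{\overline{n},1}\times V_{\overline{n}}$ forces $v=1$, and a symmetric argument on the right forces the corresponding $z=1$. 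The remaining equality $\overline{n}t=\overline{n}t'$ in $\overline{N}/T$-cosets is automatic. The source of the product morphism is smooth (it is affine space times the $T$-orbit $\overline{n}T$), the target is a locally closed subvariety of the normal monoid $M$, and we are in characteristic zero; hence by Zariski's Main Theorem (or by computing differentials on each factor using the root-space decomposition) the bijective morphism is an isomorphism. The main obstacle I anticipate is carrying out Step~2 cleanly when $\overline{n}$ is non-invertible: every time one would like to conjugate by $\overline{n}$ in the group case, one must instead appeal to the identity $T\overline{n}=\overline{n}T$ and to the defining absorption properties of $V_{\overline{n}}$ and $Z_{\overline{n}}$, which do not rely on invertibility.
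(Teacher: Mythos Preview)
The paper does not prove this lemma; it is quoted verbatim as Lemma~13.1 from Renner's monograph \cite{Renner05} and used as a black box (specifically, to derive the Uniqueness Criterion~(\ref{uniqueness}) that feeds into the proof of Proposition~\ref{B orbits intersect with closure(N)}). So there is no proof in the paper to compare your proposal against.

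That said, your outline is the standard one and matches in spirit how this is done in the monoid literature: establish $T\overline{n}=\overline{n}T$, show $V_{\overline{n}}$ and $Z_{\overline{n}}$ are $T$-stable closed subgroups of $U$ so that the root-subgroup complements $U_{\overline{n},1}$, $U_{\overline{n},2}$ exist, then run the usual Bruhat-type rewriting for surjectivity and a cancellation argument for injectivity. One small point: in Step~1 you write $\overline{n}=ne$, whereas the paper (in the proof of Proposition~\ref{B orbits intersect with closure(N)}) uses the factorization $\overline{n}=en$ with $e\in E(\overline{T})$, $n\in N_G(T)$; both conventions are valid in a Renner monoid, but be consistent with whichever you adopt. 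The appeal to Zariski's Main Theorem in characteristic zero at the end is appropriate for upgrading bijectivity to isomorphism, though in practice one often checks this more directly via the explicit product structure on the source.
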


As a consequence of Lemmas~\ref{BCR} and~\ref{BCR uniqueness} 
we have the following important observation: \\

\noindent 
{\bf Uniqueness Criterion:} Given an element $m\in M$, 
there exist unique $u \in U_{\overline{n},1}, v \in U_{\overline{n},2}$, and $\overline{n} \in \overline{N}$ such that 
\begin{align}\label{uniqueness}
m = u \overline{n} v.
\end{align}

We continue with the assumption that $M$ is a reductive monoid with an antiinvolution 
$\theta_{an} : M\rightarrow M$. Let $H\subseteq G$ denote, as usual, the fixed subgroup $G^\theta$, 
where $\theta : G\rightarrow G$ is the involution $\iota \circ \theta_{an}$, where $\iota$ stands for the inverse map. 
Here, we are going to investigate the sets of $B*$-orbits in following varieties: 
\begin{itemize}
\item the Zariski closure $\overline{P}$ in $M$ of $P=\{ g\theta (g^{-1}) :\ g\in G \} \simeq G/H$;
\item the Zariski closure $\overline{Q}$ in $M$ of $Q=\{g \in G:\ \theta(g) = g^{-1} \}$;
\item and $M_Q := \{ x\in M:\ \theta_{an} (x)= x\}$.
\end{itemize}

Assume from now on that $T$ is a $\theta$-stable maximal torus of the $\theta$-stable
Borel subgroup $B\subseteq G$. Notice in this case that the corresponding unipotent subgroup $U \subset B$ 
has to be $\theta$-stable, as well. Moreover, since $T$ is $\theta$-stable, if $n$ is an element 
from the normalizer $N= N_G(T)$, then 
$\theta(n) t \theta(n)^{-1} = \theta(n) \theta(t') \theta(n^{-1})= \theta(n t' n^{-1}) \in T$ for some $t'\in T$. 
In other words, $\theta(N) = N$. It follows that the Zariski closure $\overline{N}$ is $\theta_{an}$-stable.

\begin{Proposition}\label{B orbits intersect with closure(N)}
Any $B*$-orbit in $M_Q$ contains an element of $\overline{N}$. 
\end{Proposition}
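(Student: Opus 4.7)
The proof strategy is to use the uniqueness of the generalized Bruhat--Chevalley decomposition (Lemma~\ref{BCR uniqueness}) together with the fixed-point condition $\theta_{an}(m)=m$ to produce an explicit $b\in B$ with $b*m\in\overline{N}$. Given $m\in M_Q$, I would first invoke Lemma~\ref{BCR uniqueness} to write $m=u\overline{n}v$ uniquely, with $u\in U_{\overline{n},1}$, $v\in U_{\overline{n},2}$, and $\overline{n}\in\overline{N}$. Since $T$ and $B$ are $\theta$-stable by hypothesis, $U$ and $\overline{N}$ are $\theta$-stable, so the antiinvolution $\theta_{an}$ carries $U$ into $U$ and $\overline{N}$ into $\overline{N}$. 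Applying $\theta_{an}$ to $m=u\overline{n}v$ and using $\theta_{an}(m)=m$ yields the second factorization
$$
m=\theta(v)^{-1}\theta_{an}(\overline{n})\theta(u)^{-1},
$$
which lies in the same Bruhat cell. By uniqueness of the double coset, $\theta_{an}(\overline{n})\in\overline{n}T$, so we may write $\theta_{an}(\overline{n})=\overline{n}t_{0}$ for a unique $t_{0}\in T$. In particular $M_Q\cap B\overline{n}B$ is nonempty only for $\theta_{an}$-stable Renner classes.

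The next step is to rewrite the second factorization in canonical form $U_{\overline{n},1}\cdot\overline{n}T\cdot U_{\overline{n},2}$ and compare it term-by-term with $m=u\overline{n}v$. Using the direct product decompositions $U=V_{\overline{n}}\cdot U_{\overline{n},1}$ and $U=Z_{\overline{n}}\cdot U_{\overline{n},2}$, I would split $\theta(v)^{-1}$ and $\theta(u)^{-1}$ into their components, absorb the $V_{\overline{n}}$- and $Z_{\overline{n}}$-parts into the $\overline{n}T$ factor (since these are precisely the subgroups that commute with $\overline{n}$ up to $B$), and then apply the uniqueness of Lemma~\ref{BCR uniqueness} to extract a rigid compatibility between $u$, $v$, and their $\theta$-images. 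The outcome is that $b:=u^{-1}\in U\subseteq B$ satisfies
$$
b*m=u^{-1}(u\overline{n}v)\theta(u)=\overline{n}\cdot v\theta(u),
$$
and the compatibility forces $v\theta(u)\in T$, so $b*m\in\overline{n}T\subseteq\overline{N}$, as desired.

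The principal technical obstacle is the canonicalization step above. Because $\theta_{an}$ is an antihomomorphism, it swaps the roles of the left and right unipotent factors in the Bruhat cell; consequently $U_{\overline{n},1}$ and $U_{\overline{n},2}$ are not individually $\theta$-stable, so one cannot simply equate $\theta(v)^{-1}$ with $u$ or $\theta(u)^{-1}$ with $v$. The $\theta$-stability of $B$ forces $\theta^{*}$ to preserve the set of positive roots, which makes a root-by-root bookkeeping of the effect of $\theta$ on the complements $U_{\overline{n},j}$ tractable; combined with the fact that $\overline{n}$ and $\theta_{an}(\overline{n})$ index the same Renner class, this yields the compatibility that closes the argument.
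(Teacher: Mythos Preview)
Your opening moves coincide with the paper's: write $m=u\,\overline{n}\,v$ via Lemma~\ref{BCR uniqueness}, apply $\theta_{an}$, and use disjointness of the Bruhat--Chevalley cells to see that $\theta_{an}(\overline{n})$ and $\overline{n}$ lie in the same $T$-coset of $\overline{N}$. The divergence---and the gap---is in your next step.

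You assert that after splitting $\theta(v)^{-1}$ and $\theta(u)^{-1}$ into their $V_{\overline{n}}$-, $Z_{\overline{n}}$-, and complementary components and invoking uniqueness, one obtains $v\theta(u)\in T$. Since $v\theta(u)\in U$, this would force $v\theta(u)=1$, i.e.\ $v=\theta(u)^{-1}$. But uniqueness only compares two factorizations that are \emph{both} in canonical form $U_{\overline{n},1}\times \overline{n}T\times U_{\overline{n},2}$; the expression $\theta(v)^{-1}\theta_{an}(\overline{n})\theta(u)^{-1}$ is not. Rewriting it in canonical form mixes the components nontrivially (the $V_{\overline{n}}$-part of $\theta(v)^{-1}$ gets absorbed on the \emph{left}, the $Z_{\overline{n}}$-part of $\theta(u)^{-1}$ on the \emph{right}, and both interact with $\overline{n}$), so what uniqueness yields is a coupled system relating components of $u,v$ to components of $\theta(u),\theta(v)$---not the clean identity $v\theta(u)=1$. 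Your ``root-by-root bookkeeping'' paragraph names the obstacle but does not overcome it, and in fact the conclusion $v\theta(u)=1$ is generally false; already in Springer's group-level argument the analogous element is nontrivial.

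The paper does not attempt to prove $a:=v\theta(u)\in T$. Instead it observes that $u^{-1}*m=\overline{n}a$ lies in $M_Q$, hence $\theta(a)\,\overline{n}\,a=\overline{n}$. Writing $\overline{n}=en$ with $e\in E(\overline{T})$, this relation forces $e\theta(a)e$ to lie in the unipotent radical of the Borel $eBe$ of the reductive group $H_e$. One then takes a \emph{square root} $a^{1/2}$ in $U$ and checks $\theta(a)^{1/2}\,\overline{n}\,a^{1/2}=\overline{n}$, i.e.\ $\theta(a)^{1/2}*(\overline{n}a)=\overline{n}$. This square-root step---passing to $eMe$ and exploiting that unipotent groups in characteristic zero are $2$-divisible---is the missing idea in your proposal.
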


\begin{proof}

For $m\in M_Q$, as it is shown at the beginning of this section, there exist unique 
$u \in U_{\overline{n},1}, v \in U_{\overline{n},2}$, and $\overline{n}\in \overline{N}$ 
such that $m= u\overline{n} v$. Then 
$$
u \overline{n} v = m= \theta_{an} (m) = \theta_{an} (v) \theta_{an}(\overline{n}) \theta_{an} (u) 
= \theta(v)^{-1} \theta_{an}(\overline{n}) \theta(u)^{-1}.
$$
Since Bruhat-Chevalley decomposition (\ref{A:disjoint}) is a disjoint union, we see that 
$ \theta_{an} (\overline{n}) \in \overline{n}T$. Let $t\in T$ be such that  
$ \theta_{an} (\overline{n}) =\overline{n} t$.

Let $a$ denote $v\theta(u)$. 
It is clear that $\overline{n} a $ lies in the $B*$-orbit of $m$. 
Therefore,  we have 
\begin{align*}
\overline{n} a &= \theta_{an} (\overline{n} a )= \theta_{an} (a) \theta_{an} (\overline{n}) 
= \theta(a)^{-1} \overline{n}t \qquad \text{for some $t\in T$},
\end{align*}
or  
\begin{align}\label{normal}
\theta(a) \overline{n} a = \overline{n}t \qquad \text{for some $t\in T$}.
\end{align}	
Suppose $\overline{n} = e n$ for some $e\in E(\overline{T})$ and $n\in N$. 
By (\ref{normal}), we see that 
$$
\theta(a) e = e n t a^{-1} n^{-1}.
$$
In particular, we see the equality $\theta(a) e= e\theta(a) e$. 
Since $U$ is $\theta$-stable, we know that $\theta(a) \in U$, 
therefore, $\theta(a)e$ is an element of $eUe$. 
Notice that $nt = t' n$ for some $t'\in T$, so, $\theta(a) e = e t' n a^{-1} n^{-1}$.
At the same time, $n a^{-1} n^{-1}\in U$. 
In other words, $\theta(a) e = e t' u'$, where $t'\in T$, 
$u':=n a^{-1} n^{-1}\in U$, and we know
that $et'u'\in e U e$. Therefore, it is harmless to continue 
with $\theta(a) e = e u'$. 

Since square roots exists in unipotent groups, we see that 
$(e\theta(a) e )^{1/2} = e \theta(a)^{1/2} e = ( e n a^{-1} n^{-1} e)^{1/2} 
= e (n a^{-1} n^{-1})^{1/2}e = e ( n a^{-1/2} n^{-1}) e$.

The unit group of $eMe$ is $eC_G(e)$ and $eC_B(e)=eBe$ is a Borel subgroup of $eC_G(e)$ (see Lemma~\ref{L:H-class}).
Since $eUe\subseteq eC_B(e)$, we have that $e \theta(a)^{1/2} e = \theta(a)^{1/2}e$ and that $e ( n a^{-1/2} n^{-1}) e = e n a^{-1/2} n^{-1}$.
Now, on one hand we have $\theta(a)^{1/2}e= e n a^{-1/2} n^{-1}$, or equivalently $\theta(a)^{1/2} \overline{n} a^{1/2} = \overline{n}$.
On the other hand, $ \theta(a)^{1/2} \overline{n} a^{1/2} = \theta(a)^{1/2} * (\overline{n} a)$. 
Therefore, $\overline{n}$ is contained in the $B*$-orbit of $m$.
\end{proof}

\begin{Remark}\label{R:it fixes}
Recall that $\tau : M\rightarrow M$ is defined by $\tau(x) = x \theta_{an}(x)$. The image of $\tau$ is contained in $M_Q$. 
\end{Remark}
\begin{proof}
If $m\in M$, then 
$$
\theta_{an}(\tau(m)) = \theta_{an} ( m \theta_{an}(m)) = \theta_{an}(\theta_{an}(m)) \theta_{an}(m)=m\theta_{an}(m)= \tau(m).
$$
	
\end{proof}


\begin{Lemma}\label{L:it acts}
$T\times H$ acts on $\tau^{-1}(\overline{N})$ by $(t,h) \cdot m = t m h^{-1}$. 
\end{Lemma}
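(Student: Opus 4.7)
The plan is to verify two things: first, that the formula $(t,h)\cdot m = tmh^{-1}$ actually defines a group action of $T\times H$ on $M$, and second (the substantive point), that this action preserves the subset $\tau^{-1}(\overline{N})\subseteq M$.

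For the first point, note that $(t,h)\cdot m = tmh^{-1}$ is simply the restriction to the subgroup $T\times H\subseteq G\times G$ of the standard left-right biaction of $G\times G$ on $M$ by multiplication. Associativity and the identity axiom are therefore automatic. So the only work is to show invariance of $\tau^{-1}(\overline{N})$.

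For invariance, I would compute $\tau(tmh^{-1})$ directly using that $\theta_{an}$ is an antihomomorphism (Theorem~\ref{T:extension of involution}(1)), that $\theta_{an}(g)=\theta(g)^{-1}$ for $g\in G$ (Theorem~\ref{T:extension of involution}(3)), and crucially that $\theta(h)=h$ because $h\in H=G^\theta$. Explicitly,
\begin{align*}
\tau(tmh^{-1}) &= (tmh^{-1})\,\theta_{an}(tmh^{-1})\\
&= tmh^{-1}\cdot \theta_{an}(h^{-1})\theta_{an}(m)\theta_{an}(t)\\
&= tmh^{-1}\cdot \theta(h)\,\theta_{an}(m)\,\theta(t)^{-1}\\
&= tm\,\theta_{an}(m)\,\theta(t)^{-1}\\
&= t\,\tau(m)\,\theta(t)^{-1}.
\end{align*}
Since $T$ is $\theta$-stable, $\theta(t)^{-1}\in T$. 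So it remains to argue that left and right multiplication by $T$ preserve $\overline{N}$: for any fixed $s\in T$, left multiplication by $s$ is a morphism $M\to M$ sending $N=N_G(T)$ into itself (because $T\subseteq N$), hence by continuity it sends the Zariski closure $\overline{N}$ into $\overline{N}$; the same holds for right multiplication. Therefore $t\,\tau(m)\,\theta(t)^{-1}\in\overline{N}$ whenever $\tau(m)\in \overline{N}$, which is exactly what is needed.

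There is no serious obstacle here: the verification is routine once one carefully uses the antihomomorphism property of $\theta_{an}$ and the identity $\theta(h)=h$ on $H$. The only point requiring a moment's thought is the closure step for $\overline{N}$, which is handled by the continuity argument above.
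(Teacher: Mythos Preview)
Your proof is correct and follows exactly the approach of the paper, which simply asserts that it suffices to check $\tau(tmh^{-1}) = t\,m\,\theta_{an}(m)\,\theta(t)^{-1}\in\overline{N}$ and calls this ``straightforward.'' You have merely supplied the details of that straightforward verification.
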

	
\begin{proof}	
It suffices to check that for all $t\in T, h\in H$, and $m\in \tau^{-1}(\overline{N})$, 
the image $\tau(tmh^{-1})$ is contained in $\overline{N}$. But
$$
\tau(tmh^{-1})  = t m\theta_{an}(m) \theta(t)^{-1}.
$$ 
Since $\tau (m) = m \theta_{an}(m) \in \overline{N}$ and since $\overline{N}$ is $T*$-stable, 
the proof is finished.

\end{proof}

\begin{Remark}
Let $(T,B)$ be a pair of $\theta$-stable maximal torus and Borel subgroup such that $T\subseteq B$.
Let $\mc{V}$ denote the set of all $g\in G$ such that $\tau(g) \in N_G(T)$.
It is easy to verify that $\mc{V}\subset G$ is closed under the action of $T\times H$,   
$$
(t,h) \cdot g = tgh^{-1}\ \text{ for } t\in T, h\in H,\ g \in G.
$$
Let $V$ denote the set of $T\times H$-orbits in $\mc{V}$. For $v\in V$, let $x(v) \in \mc{V}$ denote 
a representative of the orbit $v$. The inclusion $\mc{V} \hookrightarrow G$ induces a bijection from $V$ 
onto the set of $B\times H$-orbits in $G$. In particular, $G$ is the disjoint union of the double cosets 
$B x(v) H$, $v\in V$. See~\cite{Helminck96}.
\end{Remark}

\begin{Theorem}\label{M_Q is spherical}
The following sets are in bijection with each other;
\begin{enumerate}
\item $B*$-orbits in $M_Q$, 
\item $T\times H$-orbits in $\tau^{-1} (\overline{N}) \subset M$.
\end{enumerate}
\end{Theorem}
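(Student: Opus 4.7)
My plan is to realize the required bijection via the map
\[
\Phi : \tau^{-1}(\overline{N})/(T \times H) \longrightarrow M_Q/B*, \qquad [m] \longmapsto [\tau(m)]_{B*}.
\]
Since $\theta_{an}$ is an order-two antihomomorphism, $\tau(m) = m\theta_{an}(m)$ automatically lies in $M_Q$, so $\tau$ sends $\tau^{-1}(\overline{N})$ into $\overline{N} \cap M_Q$. Well-definedness on orbit spaces will then follow from the direct computation
\[
\tau(tmh^{-1}) \;=\; tm\theta_{an}(m)\theta(t)^{-1} \;=\; t * \tau(m),
\]
which uses $\theta(h) = h$ for $h \in H$ together with $T \subseteq B$.

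For surjectivity, given a $B*$-orbit $\mathcal{O} \subseteq M_Q$, Proposition~\ref{B orbits intersect with closure(N)} produces a representative $n \in \mathcal{O} \cap \overline{N}$ with $\theta_{an}(n) = n$. Since $\tau(bm') = b*\tau(m')$ for all $b \in B$, it suffices to exhibit any $m' \in M$ with $\tau(m') = n$. My plan for producing $m'$ is to decompose $n = e\widetilde{n}$ with $e \in E(\overline{T})$ and $\widetilde{n} \in N_G(T)$ and to descend to the reductive submonoid $eMe$ (whose unit group is $eC_G(e)$), where the unipotent-square-root extraction used in the proof of Proposition~\ref{B orbits intersect with closure(N)}, now applied in reverse, will build an explicit preimage of $n$ under $\tau$.

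For injectivity, suppose $\tau(m_2) = b * \tau(m_1)$ for some $b \in B$. Then $\tau(bm_1) = b*\tau(m_1) = \tau(m_2)$, so $bm_1$ and $m_2$ lie in $\tau^{-1}(\overline{N})$ with equal image under $\tau$. The elementary identity $\tau(xh) = \tau(x)$ for $h \in H$ suggests its converse: $\tau(x)=\tau(y)$ with both in $\tau^{-1}(\overline{N})$ forces $y = xh$ for some $h \in H$. I plan to establish this converse by applying the uniqueness of the Bruhat-Chevalley decomposition (Lemma~\ref{BCR uniqueness}) to $x$ and $y$, yielding $m_2 = bm_1 h$ with $h \in H$. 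The final step refines $b \in B$ to $b \in T$: writing $m_i = u_i\overline{n}_i v_i$ in the unique Bruhat-Chevalley form, expanding $\tau(m_i) = u_i\overline{n}_i v_i\theta(v_i)^{-1}\theta_{an}(\overline{n}_i)\theta(u_i)^{-1}$, and demanding that this lie in $\overline{N}$, one extracts cancellation relations on the unipotent factors which pin the conjugating element $b$ into the maximal torus.

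The main obstacle is surjectivity: unlike the classical Springer-Helminck setting where $\tau : G \to P$ is surjective by construction, its extension to $M$ does not manifestly surject onto $\overline{N} \cap M_Q$, so constructing an explicit preimage of $n \in \overline{N} \cap M_Q$ requires carefully descending to the $J$-class submonoid $eMe$ and invoking square roots in its unipotent radical. Injectivity is by comparison a routine exercise in the uniqueness of the Bruhat-Chevalley form.
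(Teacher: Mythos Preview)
Your map $\Phi$ and the reliance on Proposition~\ref{B orbits intersect with closure(N)} match the paper's strategy, but the paper organizes the argument differently and thereby avoids both of your obstacles. Rather than proving $\Phi$ bijective directly, the paper first establishes the bijection
\[
\{\text{$B*$-orbits in $M_Q$}\}\;\longleftrightarrow\;\{\text{$T*$-orbits in $\overline{N}\cap M_Q$}\}
\]
entirely on the \emph{image} side: Proposition~\ref{B orbits intersect with closure(N)} gives a representative $\overline{n}\in\overline{N}$ for each $B*$-orbit, and if $\overline{n}_1=b*\overline{n}_2$ with $b\in B$, the disjointness of the Bruhat--Chevalley cells $B\overline{n}B$ forces $b\in T$. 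Only afterwards does the paper identify $T*$-orbits in $\overline{N}\cap M_Q$ with $T\times H$-orbits in $\tau^{-1}(\overline{N})$, invoking the first paragraph's observation that $\tau$ intertwines the $T\times H$-action with the $T*$-action. In particular, the paper never needs to manufacture a $\tau$-preimage of a given $n\in\overline{N}\cap M_Q$; your planned descent to $eMe$ with unipotent square roots is extra machinery the paper does not use.

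Your injectivity plan contains a genuine gap. The converse you intend to prove --- that $\tau(x)=\tau(y)$ with $x,y\in\tau^{-1}(\overline{N})$ forces $y=xh$ for some $h\in H$ --- is false once $x,y$ are allowed to be non-invertible. In $M=\mt{Mat}_2$ with $\theta_{an}(m)=m^\top$, take
\[
x=\begin{pmatrix}1&i\\0&0\end{pmatrix},\qquad y=0.
\]
Then $\tau(x)=xx^\top=0=\tau(y)$, yet no $h\in O_2$ satisfies $y=xh$ (indeed $x$ and $y$ lie in distinct $T\times H$-orbits). The fibre of $\tau$ over a non-invertible point is not a single right $H$-coset, so Bruhat--Chevalley uniqueness applied to $x$ and $y$ separately cannot produce the relation $y=xh$. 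The paper sidesteps this entirely by comparing two elements of $\overline{N}$ rather than two arbitrary $\tau$-preimages; its argument lives in $M_Q$, not in $M$. Your ``refine $b\in B$ to $b\in T$'' step is essentially the paper's whole proof, but you have buried it behind a fibre statement that does not hold.
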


\begin{proof}

We start with an observation; 
under $\tau$, the set of $B\times H$-orbits in $M$ is 
surjectively mapped onto the set of $B*$-orbits in $M_Q$.
To see this, first, we show that any $B\times H$-orbit in $M$ is mapped by $\tau$ 
onto a $B*$-orbit in $M_Q$. Let $\mc{O}_a$ be the $B\times H$-orbit of an element 
$a$ from $M$. Since
$$
\tau(bah^{-1}) = bah^{-1} \theta_{an}(bah^{-1}) = ba \theta_{an}(a) \theta_{an}(b) = b*\tau(a),
$$
we see that $\tau(\mc{O}_a)= B*\tau(a)$.
Next, we will show that any $B*$-orbit in $M_Q$ comes from a $B\times H$-orbit in $M$. 
Let $x$ be an element in $M_Q$. 
By Proposition \ref{B orbits intersect with closure(N)}, we know that any $B*$-orbit 
in $M_Q$ intersects $\overline{N}$. In particular, $B*x\cap \overline{N} \neq \emptyset$.
Let $\overline{n}$ be an element from $\overline{N}$ such that $b*x=\overline{n}$
for some $b\in B$. By Lemma~\ref{L:it acts}, we know that $T\times H$ acts on $\tau^{-1}(\overline{N})$.
Let $a\in \tau^{-1}(\overline{N})$ be such that $\tau(a) = \overline{n}$. Then 
the $B\times H$-orbit $\mc{O}_a$ of $a$ is mapped to $B*\tau(a)=B*x$. 
Now we know that $B\times H$-orbits in $M$ are mapped onto 
$B*$-orbits in $M_Q$. 

Incidentally, the argument in the above paragraph 
shows the following: the assignment defined by 
\begin{align}\label{A:define f}
f: (T\times H)a \longmapsto T*\tau(a) \longmapsto B*\tau(a)
\end{align}
is a surjective map between the set of $T\times H$-orbits in $\tau^{-1}(\overline{N})$
and the set of $B*$-orbits in $M_Q$. 
We proceed to show that $f$ is injective. 

Let $O$ be a $B*$-orbit in $M_Q$ and suppose that 
$\overline{n}_1$ and $\overline{n}_2$ are two elements 
from $\overline{N}\cap O$. Then there exists $b\in B$ such that 
$$
\overline{n}_1 = b*\overline{n}_2 = b \overline{n}_2 \theta_{an}(b).
$$
Since the Bruhat-Chevalley decomposition $M=\bigcup_{\dot{r}\in R} BrB$ is a disjoint union,
and $B$ is $\theta_{an}$-stable, 
we see from the uniqueness criterion that $b\in T$ and $\overline{n_2} \in \overline{n_1} T$. 
In other words, there exists $t\in T$ such that $t * \overline{n_2}= \overline{n_1}$. 
Let $a_1$ and $a_2$ be two element from $\tau^{-1}(\overline{N})$ such 
that $\tau(a_1)=\overline{n_1}$ and $\tau(a_2) = \overline{n_2}$. 
Then $t* \tau(a_2) = \tau(a_1)$. But $t*\tau(a_2) = \tau(ta_2)$, or, 
equivalently, $ta_2\in \tau^{-1}(\overline{n_1})$. 
Consequently, we see that the intersection with $\overline{N}$ 
of a $B*$-orbit $O$ ($=\tau(\mc{O}_{a_1})$) is covered by a single 
$T\times H$-orbit in $\tau^{-1}(\overline{N})$.
In particular, the map (\ref{A:define f}) is one-to-one. 
\end{proof}

\begin{Remark}\label{R:along a T orbit}
An important corollary of the proof of Theorem~\ref{M_Q is spherical}
is that the number of $B*$-orbits in $M_Q$ is finite. Indeed, any $B*$-orbit
in $M_Q$ intersects $\overline{N}$ along a $T*$-orbit and $\overline{N}/T$
is a finite semigroup. 
\end{Remark}

Now we are ready to prove our second main result, which states that 
the following sets are finite and they are in bijection with each other:
\begin{enumerate}
\item $B*$-orbits in $\overline{Q}$ (respectively, $B*$-orbits in $\overline{P}$), 
\item $T\times H$-orbits in $\tau^{-1} (\overline{N} \cap \overline{Q})$ (respectively, 
$T\times H$-orbits in $\tau^{-1} (\overline{N} \cap \overline{P})$).
\end{enumerate}

\begin{proof}[Proof of Theorem~\ref{T:Corollary}.]
Since $M_Q$ is closed, the inclusions $P\subseteq Q \subseteq M_Q$ imply 
that $\overline{P}\subseteq \overline{Q} \subseteq M_Q$. Moreover, we know 
that $P$ and $Q$, and hence $\overline{P}$ and $\overline{Q}$ are $B*$-stable.
By Remark~\ref{R:along a T orbit} we know that $M_Q$ is comprised of 
finitely many $B*$-orbits. The rest of the proof follows from Theorem~\ref{M_Q is spherical}.
\end{proof}

There is a well known classification, due to Cartan, of involutions on semisimple groups.
For the classical groups, up to inner automorphisms there are seven types of involutions in total. 
For the exceptional groups there are in total nine involutions. 
See Chapter X, Section 6 of~\cite{Helgason78} for a complete list. 
We finish this section by presenting some examples.

\begin{Example}
	
Let $G_0$ denote $\mt{PSL}_n$, the projective special linear group of $n\times n$ matrices with determinant 1. 
Then $\theta_0 : G_0 \rightarrow G_0$ defined by $\theta_0(g) = (g^{-1})^\top$ ($g\in G_0$) is an involutory automorphism. 
Let $T_0$ denote the maximal torus of diagonal matrices in $G_0$ and let $\omega_1$ denote the first fundamental 
weight. Let $(\rho_0,V) \cong (id,\C^n)$ denote the corresponding 
irreducible (minuscule) representation. Then the $J$-irreducible monoid associated with $\omega_1$ 
is nothing but the monoid of $n\times n$ matrices, 
$$
Z_V:=\overline{\C^* \cdot \mt{PSL}_n} = \mt{Mat}_n,
$$
which we denote by $M$. Then the unit group of $M$ is $G=\mt{GL}_n$.
Clearly, $\theta_0$ extends to $G$ by the same formula, $\theta(g)  = (g^{-1})^\top$ ($g\in G$). 
The $G*$-orbit of the identity is equal to the set of invertible symmetric $n\times n$ matrices, 
$$
G*1_{\mt{GL}_n}=P =\{ gg^\top:\ g\in \mt{GL}_n\}.
$$

We observe that for our choices of $\theta$ and $G_0$, the subvariety $Q:=\{ g\in G:\ \theta(g) =g^{-1}\}$ is equal to $P$.
Therefore, in $M$, we have 
$$
\overline{Q}= \overline{P} = \mt{Sym}_n,
$$
the affine variety of symmetric $n\times n$ matrices. Also, we notice that the unique antiinvolution on $M$ 
that is extended from the involution $\theta$ on $G$ is given by $\theta_{an} (m) = m^\top$ for $m\in M$. 
Therefore, $M_Q$ is equal to $\mt{Sym}_n$ as well. 
Finally, we know from~\cite{Szechtman07} that $B*$-orbits in $\mt{Sym}_n$ are parametrized 
by the $n\times n$ ``partial involutions'' in the ``rook monoid'' $R_n$.
Here, the {\em rook monoid} is the Renner monoid of $\mt{Mat}_n$; it is the finite monoid which consists 
of $n\times n$ 0/1 matrices with at most one 1 in each row and column. A {\em partial involution} in $R_n$
is an element $x\in R_n$ such that $x^\top = x$.

\end{Example}

\begin{Example}
Let $(\rho_0,V)$ denote the second fundamental representation $V=\bigwedge^2 \C^{2n}$
of $G_0:=\mt{PSL}_{2n}$. As before, let $T_0$ denote the maximal torus consisting of diagonal
matrices in $G_0$. We consider the involution $\theta_0(g) = -J (g^{-1})^\top J$ ($g\in G_0$), 
where $J$ is the $2n\times 2n$ block diagonal matrix 
$$
J=\mt{diag}(J_2,\dots,J_2)\ \text{ with } J_2= \begin{pmatrix} 0 & 1 \\ -1 & 0 \end{pmatrix}.
$$
More explicitly, $V$ is equal to the space of $2n\times 2n$ skew-symmetric matrices, and the 
action of $G_0$ on $V$ is given by 
$$
g \cdot A = (g^{-1})^\top A g^{-1}.
$$
	
For the notational ease, let us denote the operator $\rho_0(g)$ on $V$ ($g\in G_0$) by $\phi_g$. 
Note that, for $g=J$ we have $\phi_J^2 = 1_{\mt{GL}(V)}$.
It is not difficult to show that $\rho_0$ is faithful, and that the extension of $\theta_0$ 
to $G= \C^*\cdot \rho_0(G_0)$ is given by 
$$
\theta( c\phi_g ) = c\phi_J \phi_{(g^{-1})^\top} \phi_J, \text{ for all $g\in G_0$ and $c\in \C^*$.}
$$

Now, let $y$ be an element from $Q$. 
If $y=c\phi_g$ for some $g\in G_0$, and $c\in \C^*$, then 
$$
c^{-1}\phi_g^{-1}= \theta(y) = c \phi_J \phi_{(g^{-1})^\top}  \phi_J,\ \text{ or, equivalently }\ \mt{1}_V =  c^2\phi_{gJ (g^{-1})^\top J}.
$$	
Since $\rho_0$ is faithful, $c=1$, and $gJ (g^{-1})^\top J=1_{G_0}$, or $g^{-1}= J(g^{-1})^\top J$. 
In other words, $Q$ is isomorphic to $Q_0:=\{ g\in \mt{PSL}_{2n}:\ \theta_0(g) = g^{-1} \}$. 

On the other hand, we know that $Q_0$ is equal to $P_0 := \{  g\theta_0(g^{-1}):\ g \in \mt{PSL}_{2n}\}$,
see Section 11.3.5 of \cite{GoodmanWallach}. 
Since the image of $P_0$ under $\rho_0$ is equal to $P$, we see that $P=Q$, 
so $P$ is a closed subvariety of $G$. 	

Let $\theta_{an}$ be the unique antiinvolution extension of $\theta$ to the monoid $M$ of $(\rho_0,V)$.
Then 
$$
\overline{Q} = M_Q= \{ y\in M :\ \theta_{an}(y) = y \}.
$$
Next, we compute the parametrizing set of $B*$-orbits in $M_Q= M_P$. 
To this end, we determine the normalizer of $T$ in $G$.  
We claim that $N_G(T)=\C^* \rho_0(N_{G_0}(T_0))$. 
Indeed, let $x=c\rho_0(g)\in G$ be an element from the normalizer of $T$, 
and let $t\in T$. Since $t= d\rho_0(t')$ for some $t'\in T_0$ and $d\in \C^*$, we have 
$x t x^{-1}= d \rho_0(g t' g^{-1})\in T$, or equivalently, $gt'g^{-1}\in T_0$.
Thus, $t$ lies in $\C^* \rho_0(N_{G_0}(T_0))$. The converse inclusion is obviously true. 

Let us look at a typical element of $N_G(T)$.
Assume that $g$ is a monomial matrix, that is to say, every row and every column have exactly one nonzero entry. 
We will prove that, once a basis is fixed, $\rho_0(g)=\phi_g$ is a monomial matrix as well.  
Towards this end, we choose the following basis 
$$
F_{i,j}= E_{i,j}- E_{j,i} \qquad 1\leq i < j \leq 2n,
$$
where $E_{i,j}$'s are the elementary matrices. 
Suppose that the inverse of $g\in G_0$ is the matrix $g^{-1}= (g_{k,l})_{k,l=1}^{2n}$. 
Obviously, $g^{-1}$ is a monomial matrix, as well. 
Since $\rho_0(g) \cdot E_{i,j} = (g^{-1})^\top E_{i,j} g^{-1}= (g_{i,k}g_{j,l})_{k,l}^{2n}$, we see that 
\begin{align}\label{A:explicitly}
g\cdot F_{i,j}= \rho_0(g) \cdot F_{i,j} &= (g_{i,k}g_{j,l} - g_{j,k}g_{i,l})_{k,l}^{2n}.
\end{align}

We continue with a special case of our claim by assuming that $g$ is a diagonal matrix. 
Then the $(k,l)$-th entry (with $k< l$) of $g \cdot F_{i,j}$ is nonzero if and only if $i=k$ and $j=l$. 
In this case, $g \cdot F_{i,j} = g_{i,i}g_{j,j} F_{i,j}$. Thus, the matrix representing $\rho_0(g)$ is 
the $n(n-1)\times n(n-1)$ diagonal matrix $\mt{diag}( s_{1,2},s_{1,3},\dots, s_{n-1,n})$ with $s_{i,j}=g_{i,i}g_{j,j}$.
Now, more generally, assume that $g$ is a monomial matrix. Then the $(k,l)$-th entry (with $k< l$) 
$g_{i,k}g_{j,l} - g_{j,k}g_{i,l}$ of $g \cdot F_{i,j}$ is nonzero if and only if one of the following is true;

\begin{itemize}
\item[i)] the entries of $g^{-1}$ at its $(i,k)$-th and the $(j,l)$-th positions are nonzero at the same time, or 
\item[ii)] the entries of $g^{-1}$ at its $(i,l)$-th and the $(j,k)$-th positions are nonzero at the same time.
\end{itemize}
Observe that i) and ii) do not hold true at the same time. Observe also that, for each $i<j$ there 
exists a unique pair $(k,l)$ with $k<l$ such that either i) is true, or ii) is true. 
Therefore, if $g^{-1}$ is a monomial matrix, then 
\begin{align}\label{monomial matrix action}
g \cdot F_{i,j} = 
\begin{cases} 
g_{i,k}g_{j,l} F_{k,l} & \text{ if }  g_{i,k}g_{j,l} \neq 0, \\
g_{i,l}g_{j,k} F_{k,l} & \text{ if }  g_{i,l}g_{j,k} \neq 0.
\end{cases}
\end{align}
It follows that if $g$ is a monomial matrix, then so is the matrix of $\rho_0(g)=\phi_g$.

Now, let $x\in M$ be an element from $\overline{N_G(T)}$. Since the elements of $\overline{N_G(T)}$ 
are obtained from those of $N_G(T)$ by taking limits (in the algebraic sense), we see $x\cdot F_{i,j}$ 
is either identically zero, or it is a scalar multiple of $F_{k,l}$ for some $k,l$ as in (\ref{monomial matrix action}).
In other words, $x$ is obtained from the image of a monomial matrix in $G_0$ by replacing some of its entries by zeros.

It is well know that the invertible symmetric monomial matrices modulo the maximal torus of diagonal matrices 
represent the $B*$-orbits in $Q$, and furthermore, the finite set of orbit representatives 
is in bijection with the fixed point free involutions of the symmetric group $S_{2n}$ (see~\cite{RS90}).
Thus, in our case, the representing matrices are those that are obtained from the fixed point free monomial 
matrices by replacing some of the nonzero entries by zeros. 
These are precisely the ``partial fixed point free involutions,'' introduced in~\cite{Cherniavsky2011}.
	
\end{Example}

\section{Final remarks}\label{S:Final}

Given a reductive monoid $M$ with an antiinvolution $\theta_{an}$, 
we now have the notion of a {\em symmetric submonoid}
\begin{align}
M_{{an}} :=\{ m\in M : m \theta_{an} (m) = 1_M \}.
\end{align}
Observe that the identity element $1_M$ of $M$ is the identity element $1_G$ of $G$. 
Therefore, $ \theta_{an}(1_M)=\theta_{an}(1_G)= \theta_{an}(1_G) \theta_{an}(1_G)$, 
hence $\theta_{an}(1_M)=1_M$.
In other words, $1_M\in M_{an}$. Also, if $m_1,m_2\in M_{{an}}$, then 
$$
m_1m_2\theta_{an}(m_1m_2) = m_1 m_2\theta_{an}(m_2)\theta_{an}(m_1) 
= m_1\cdot 1_M\cdot \theta_{an}(m_1) = 1_M.
$$
Therefore, $m_1m_2 \in M_{an}$.
Note that if an element $g\in G$ lies in $M_{an}$, then 
$1_G= g^{-1} \theta_{an}(g^{-1})= g^{-1} \theta(g)$, hence $\theta(g) = g$. 
In other words, the group of invertible elements of $M_{an}$ is the fixed subgroup $H=G^{\theta}$.
The above argument provides us with an effective way of producing new linear algebraic monoids,
one for each antiinvolution $\theta_{an}$ on $M$.

\bibliography{References.bib}
\bibliographystyle{plain}

\end{document}